\newtheorem{q}{Question}
\newtheorem{thm}{Theorem}
\newtheorem{lemma}[thm]{Lemma}
\newtheorem{cor}[thm]{Corollary}
\theoremstyle{remark}
\newcommand{\beq}{\begin{equation}}
\newcommand{\eeq}{\end{equation}}
\newcommand{\RR}{\mathbb{R}}
\newcommand{\EE}{\mathbb{E}}
\newcommand{\CC}{\mathbb{C}}
\newcommand{\ol}{\overline}
\begin{document}
\author[A. Lerario, E. Lundberg]{Antonio Lerario and Erik Lundberg}
\title[Random harmonic polynomials]{On the zeros of random harmonic polynomials: the truncated model}

\begin{abstract}Motivated by Wilmshurst's conjecture and more recent work of W. Li and A. Wei \cite{LiWei2009}, 
we determine asymptotics for the number of zeros of random harmonic polynomials sampled from the \emph{truncated model}, recently proposed by J. Hauenstein, D. Mehta, and the authors  \cite{HLLM}.
Our results confirm (and sharpen) their
$(3/2)-$powerlaw conjecture \cite{HLLM} that had been formulated on the basis of computer experiments;
this outcome is in contrast with that of the model studied in \cite{LiWei2009}.
For the truncated model we also observe a phase-transition in the complex plane for the Kac-Rice density.
%Based on computer experiments, J. Hauenstein, D. Mehta,
%and the second two authors recently
%argued that an alternative ensemble 
%is more natural.
\end{abstract}

\maketitle

\section{Introduction}

A harmonic polynomial is a complex-valued harmonic function given by:
\beq \label{eq:harmonic}F(z)=p(z)+\overline{q(z)},\eeq
where $p$ and $q$ are polynomials of degree $n$ and $m$ (respectively).
Let $N_F$ denote the number of zeros of $F$,
that is, points $z\in\CC$ such that $F(z) = 0$.  

For $n>m$, we have the following bounds:
$$ n \leq N_F \leq n^2.$$

The lower bound is based on the generalized argument principle and is sharp for each $m$ and $n$.
The upper bound follows from applying Bezout's theorem to the real and imaginary parts of $F(z)=0$
after noticing that the zeros are isolated, which was shown by Wilmshurst \cite{Wilm98}.

\subsection{Wilmshurst's conjecture}

Wilmshurst  made the conjecture that
the Bezout bound can be improved to a function that is linear in $n$
for each fixed $m$, namely:
\beq \label{eq:wilm} N_F\leq 3n-2+m(m-1)\qquad \textrm{(Wilmshurst's conjecture)}\eeq
This conjecture is stated in \cite[Remark 2]{Wilm98} (see also \cite{Sh2002} and \cite{BL2010}).

For $m=n-1$, the upper bound follows from Wilmshurst's theorem \cite{Wilm98},
and examples were also given in~\cite{Wilm98} showing that this bound is sharp (shown independently in \cite{BHS1995}).
For $m=1$, the upper bound was shown by Khavinson and Swiatek \cite{K-S} using anti-holomorphic dynamics.
A proof of the Crofoot-Sarason conjecture given in \cite{G2008} (cf. \cite{BL2004}) established that this bound is sharp.
Counterexamples to the case $m=n-3$ were established analytically in \cite{LLL},
and counterexamples for a broad range of (finitely many) $m$ and $n$ were established
in \cite{HLLM} using certified numerics.
On the other hand, we still expect, in the spirit of (\ref{eq:wilm}),
that $N_F$ satisfies an upper bound that is linear in $n$ for $m$ fixed;
for instance, with S-Y. Lee, the authors conjectured in \cite[Introduction]{LLL} that $N_F\leq 2 m(n-1) + n$.

\subsection{A probabilistic version of the problem}

Given the high variability of the number of zeros $N_F$,
it is natural to ask the following.

\begin{q}\label{q:LW}
What is the expectation $\EE N_F$ of the number of zeros of a random harmonic polynomial?
\end{q}

This question was asked and answered by W. Li and A. Wei in \cite{LiWei2009},
in the case when $p$ and~$q$ are independently sampled from the complex Kostlan ensemble:
\begin{equation} \label{eq:LW}
p(z) = \sum_{k=0}^{n} a_k  z^k, \quad q(z) = \sum_{k=0}^{m} b_k z^k,
\end{equation}
where $a_k$ and $b_k$ are independent centered complex Gaussians with $\EE a_j \ol{a_k} = \delta_{jk} \binom{n}{j}$ and $\EE b_j \ol{b_k} = \delta_{jk} \binom{m}{k}$.

The choices of $p$ and $q$ in (\ref{eq:LW}) lead to the following asymptotics (as $n \rightarrow \infty$):
\beq\label{eq:dichotomy} 
\EE N_F \sim
\left\{ \begin{array}{cc}
  \frac{\pi}{4} n^{3/2}, \quad \text{when } m = n, \quad \quad \quad \quad \quad \quad \quad \quad \quad \quad \\
 n , \quad \text{when } m = \alpha n + o(n) \text{ with } 0 < \alpha < 1,
\end{array} \right.
\eeq
Notice that when $m=\alpha n$ the average number of zeros is asymptotically the \emph{fewest possible}.
This seems to suggest that, on average, an even stronger form of Wilmshurst's conjecture (\ref{eq:wilm}) holds.
However, caution is needed here,
and the dichotomy in (\ref{eq:dichotomy})
dissolves after choosing a definition of ``random''
in which the coefficients of $p$ and $q$ are more comparable in modulus
(see Theorem \ref{thm:asymp} below).

In the model (\ref{eq:LW}), where the coefficients of $p$ are asymptotically much larger in modulus than $q$ when $m=\alpha n$,
 $F$ tends to resemble an \emph{analytic} polynomial and asymptotically obeys the fundamental theorem of algebra.
In order to make $q$ more comparable to $p$, 
an alternative model (referred to as the ``truncated model'') was proposed in \cite{HLLM}
where the variances $\binom{m}{k}$ were 
replaced by $\binom{n}{k}$ in the definition (\ref{eq:LW}) of $q$,
while still choosing $m$ as the upper limit in the summation (see definition (\ref{eq:trunc}) below).
For the truncated model, 
computer experiments performed in \cite{HLLM} 
led to a conjecture that the expectation $\EE N_F$ has a $(3/2)-$powerlaw growth whenever $m = \alpha n$
for all $0<\alpha<1$.
Here, we prove (and sharpen) this conjecture, see Theorem \ref{thm:asymp} below.

Note that we do not consider here the case $m=0$ of random complex analytic polynomials,
where we would have $N_F = n$ almost surely (by the fundamental theorem of algebra).
Yet, it is still interesting in that case to study the \emph{location} of zeros; 
we refer the reader to Edelman and Kostlan's paper \cite[Sec. 8]{EK1995}
and to the recent work of Zeitouni and Zelditch \cite{ZZ}
establishing a large deviation principle for the location of the zeros
of a random analytic polynomial.

\subsection{Asymptotics for the truncated model}

We revisit Question \ref{q:LW} 
while sampling $F(z) = p(z) + \ol{q(z)}$ randomly from the truncated model, i.e.,
\begin{equation} \label{eq:trunc}
p(z) = \sum_{k=0}^{n} a_k z^k, \quad q(z) = \sum_{k=0}^{m} b_k  z^k,
\end{equation}
where $a_k$ and $b_k$ are independent centered complex Gaussians with $\EE a_j \ol{a_k} = \delta_{jk} \binom{n}{j}$ and $\EE b_j \ol{b_k} = \delta_{jk} \binom{n}{k}$.

\begin{thm}\label{thm:asymp}
Let $F(z) = p_n(z) + \overline{q_m(z)}$ 
be a random polynomial from the truncated model.
For $m = \alpha n$ with $0 < \alpha < 1$,
the expectation $\EE N_F$ of the number of zeros of $F(z)$
satisfies the following asymptotic (as $n \rightarrow \infty$)
$$\EE N_F \sim c_\alpha n^{3/2},$$
where $c_\alpha$ is given by 
\begin{equation}\label{eq:const}
c_\alpha = \frac{1}{2} \left( \arctan \left( \sqrt{\frac{\alpha}{1-\alpha} } \right) - \sqrt{\alpha(1-\alpha)} \right).
\end{equation}
On the other hand, when $n \rightarrow \infty$ with $m$ fixed we have $\EE N_F \sim n$.
\end{thm}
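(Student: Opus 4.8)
The plan is to compute $\EE N_F$ via the Kac--Rice formula. Writing $F = u + iv$ for the real and imaginary parts, the number of zeros is $\int_\CC \delta(F(z)) |J_F(z)| \, dA(z)$, where $J_F$ is the Jacobian determinant of $(u,v)$; taking expectations and using that $(F(z), \partial F(z))$ is a Gaussian vector, one obtains a density $\rho_n(z)$ that can be written as a Gaussian conditional expectation of $|J_F(z)|$ given $F(z)=0$. The first step is therefore to assemble the covariance data: with $p,q$ as in \eqref{eq:trunc}, the relevant kernels are $\EE p(z)\ol{p(w)} = (1+z\ol w)^n$ and $\EE q(z)\ol{q(w)} = \sum_{k=0}^m \binom{n}{k}(z\ol w)^k$, together with their first derivatives. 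Because $p$ and $q$ are independent and $F$ is invariant under rotation $z \mapsto e^{i\theta}z$ (in distribution), the density $\rho_n$ depends only on $r = |z|$, so $\EE N_F = 2\pi \int_0^\infty \rho_n(r)\, r\, dr$. The integrand will be expressible through the three quantities $A(r^2) := (1+r^2)^n$, $B(r^2) := \sum_{k\le m}\binom{n}{k} r^{2k}$, and their logarithmic derivatives in $t = r^2$.

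The second step is the asymptotic analysis as $n\to\infty$ with $m = \alpha n$. The heart of the matter is the behavior of the partial binomial sum $B(t) = \sum_{k=0}^{\alpha n}\binom{n}{k} t^k$ relative to the full sum $A(t) = (1+t)^n$. By a large-deviations / Laplace-type estimate (or Stirling), $\binom{n}{k}t^k$ is maximized near $k^* = nt/(1+t)$, so the truncation at $k=\alpha n$ is \emph{ineffective} (i.e. $B \approx A$, up to lower order) precisely when $k^* < \alpha n$, that is $t < \alpha/(1-\alpha)$, and is \emph{effective} (the sum is dominated by its last term, $B \approx \binom{n}{\alpha n} t^{\alpha n}$) when $t > \alpha/(1-\alpha)$. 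This dichotomy is the phase transition mentioned in the abstract, and it is where I expect the main technical obstacle: one needs uniform control of $B(t)$ and $B'(t)$, $B''(t)$ (equivalently of the ratios $tB'/B$ and $t^2 B''/B$) across both regimes and near the critical radius $r_c^2 = \alpha/(1-\alpha)$, with error terms strong enough to justify integrating the Kac--Rice density. A convenient device is to write $B(t)/t^{m} = \sum_{j\ge 0}\binom{n}{m-j}t^{-j}$ and compare with a geometric series in the effective regime, and to use $B = A - \sum_{k>m}\binom{n}{k}t^k$ with the analogous estimate in the ineffective regime.

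The third step is to plug these asymptotics into $\rho_n(r)$ and identify the leading term. In the ineffective regime $F$ behaves like a Kostlan-type harmonic polynomial with both $p$ and $q$ of ``degree $n$'', whose Kac--Rice density (after the standard computation for the full Kostlan model, cf. the $m=n$ case of \eqref{eq:dichotomy}) is $O(n^{3/2})$ per unit area concentrated on an annulus of fixed radii; after the change of variables $t = r^2$ one gets a contribution $\sim n^{3/2}\cdot\frac14\cdot(\text{length of the effective }t\text{-interval in the right coordinate})$. In the effective regime $q$ is asymptotically a monomial $b_m z^m$ and $F \approx p(z) + \ol{b_m}\,\ol z^{\,m}$; here the density is $o(n^{3/2})$, contributing only to the linear term, consistent with $\EE N_F \gtrsim n$ from the deterministic lower bound. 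Carrying out the Gaussian conditional-expectation computation for $|J_F|$ and doing the remaining one-dimensional integral should produce exactly the constant
\[
c_\alpha = \frac12\left(\arctan\!\left(\sqrt{\tfrac{\alpha}{1-\alpha}}\right) - \sqrt{\alpha(1-\alpha)}\right),
\]
the $\arctan$ arising from integrating the Kostlan density $\frac{1}{(1+t)^2}$-type weight up to $t = \alpha/(1-\alpha)$ and the $\sqrt{\alpha(1-\alpha)}$ from a boundary/subtraction term at the critical radius. Finally, for $m$ fixed, $q$ is a polynomial of bounded degree while $p$ has degree $n$, so $F$ is a small perturbation of the analytic polynomial $p$; a separate, easier Kac--Rice estimate (or a Rouché/argument-principle argument) gives $\EE N_F \sim n$, matching the lower bound $N_F \ge n$. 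The one delicate point throughout is dominated convergence for the radial integral — ruling out anomalous contributions from $r\to 0$, $r\to\infty$, and a shrinking neighborhood of $r_c$ — which is why sharp two-sided bounds on $B,B',B''$ are the crux of the argument.
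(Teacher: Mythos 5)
Your plan follows essentially the same route as the paper: an exact Kac--Rice formula whose covariance entries are truncated binomial sums, radial reduction, the dichotomy for $\sum_{k\le\alpha n}\binom{n}{k}t^k$ versus $(1+t)^n$ at the critical value $t=\alpha/(1-\alpha)$ (the paper's Lemma \ref{lemma:Laplace}, proved there via Ostrovskii's integral representation and Laplace's method rather than your term-by-term geometric comparison), and dominated convergence yielding $c_\alpha=\int_0^{\sqrt{\alpha/(1-\alpha)}}r^2(1+r^2)^{-2}\,dr$. The only discrepancies are cosmetic: the $-\sqrt{\alpha(1-\alpha)}$ term is not a separate boundary contribution but simply falls out of the antiderivative of $\sin^2\theta$ when that integral is evaluated, and the paper sidesteps your worry about uniform control near the critical radius by producing a single $n$-independent integrable dominating function from elementary Cauchy--Schwarz-type inequalities, so that only pointwise convergence away from $r_c$ is ever needed.
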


Our methods can be used to describe asymptotics for the Kac-Rice density
(providing the expected number of zeros over a prescribed region).
We notice a phase-transition in this pointwise asymptotic, 
and the leading contribution $c_\alpha n^{3/2}$ is completely accounted for
by zeros that are located within a critical distance from the origin, see Section \ref{sec:crit}.

Note that as $\alpha \rightarrow 1$,
$c_\alpha \rightarrow \pi / 4$, in agreement with \cite[Thm. 1.1]{LiWei2009}.

\medskip

An interesting aspect of harmonic polynomials
is that, unlike analytic polynomials, the function $F(z) = p(z) + \ol{q(z)}$ can reverse orientation.
The orientation of $F$ can be determined by the sign of the Jacobian determinant $J_F(z) = |p'(z)|^2 -  |q'(z)|^2$.
Let $N_+$ denote the number of zeros for which $F$ is orientation-preserving (i.e., $J_F < 0$) and $N_-$ denote the number of zeros where $F$ is orientation-reversing ($J_F > 0$).

Using a standard application of the generalized argument principle,
we then notice the following corollary of Theorem \ref{thm:asymp},
showing that orientation-reversing zeros are asymptotically as common as orientation-preserving ones.

\begin{cor}\label{cor:orientation}
For $m = \alpha n$ with $0 < \alpha < 1$, we have $\EE N_+ \sim \EE N_- \sim \frac{c_\alpha}{2} n^{3/2}$.
\end{cor}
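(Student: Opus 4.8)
The plan is to deduce the corollary from Theorem~\ref{thm:asymp} by pinning down the \emph{difference} $N_+ - N_-$ exactly, which the generalized argument principle does for us. Since $\deg p = n > \deg q = m$ (here $\alpha < 1$), the zeros of $F = p + \ol q$ are isolated by Wilmshurst's theorem \cite{Wilm98}, hence finite in number; and with probability one none of them is a critical point of $F$ (the simultaneous event $F(z) = 0$, $J_F(z) = 0$ has measure zero for the Gaussian ensemble \eqref{eq:trunc}). On this full-measure event every zero is nondegenerate, contributing local degree $+1$ if orientation-preserving and $-1$ if orientation-reversing, so the sum of local degrees equals $N_+ - N_-$.

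First I would observe that this sum of local degrees is the winding number of $F$ along a large circle $\{|z| = R\}$, and that on such a circle $p$ dominates $\ol q$ (because $n > m$), so the winding number equals $\deg p = n$. This gives the deterministic identity $N_+ - N_- = n$, and hence $\EE N_+ - \EE N_- = n$. Combining with $N_F = N_+ + N_-$ and the asymptotic $\EE N_F \sim c_\alpha n^{3/2}$ from Theorem~\ref{thm:asymp}, one gets
\[ \EE N_\pm \;=\; \frac{1}{2}\bigl(\EE N_F \pm n\bigr) \;=\; \frac{c_\alpha}{2}\, n^{3/2}\bigl(1+o(1)\bigr) \pm \frac{n}{2} \;\sim\; \frac{c_\alpha}{2}\, n^{3/2}, \]
since $n = o(n^{3/2})$. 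This is exactly the claimed asymptotic.

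I do not anticipate a genuine obstacle here: the work is entirely in Theorem~\ref{thm:asymp}, and this corollary is a short deduction. The only points that deserve a sentence of care are the almost sure nondegeneracy of the zero set (so that $N_+$ and $N_-$ are well-defined random variables and the index count is legitimate) and the identification of the boundary winding number with $n$ for $R$ large, both of which are standard. As a consistency check, letting $\alpha \to 1$ so that $c_\alpha \to \pi/4$ is in agreement with the splitting of zeros by orientation in the model of \cite{LiWei2009}.
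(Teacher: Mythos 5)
Your proposal is correct and follows essentially the same route as the paper: almost-sure nondegeneracy of the zeros, the identity $N_+ - N_- = n$ via the winding number of $F$ on a large circle (the generalized argument principle), and then the asymptotic from Theorem~\ref{thm:asymp} with $n = o(n^{3/2})$. The extra sentences you add about why the degenerate event has probability zero and why the winding number is $n$ are exactly the points the paper asserts more briefly.
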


\begin{proof}

Almost surely we have $N_F = N_+ + N_-$ (the presence of singular zeros is a probability zero event).
By topological degree theory (or the generalized argument principle \cite{DHL})
the difference $N_+ - N_-$ is given by the winding number of $F$ along a sufficiently large circle.
Moreover, since the $z^n$ term dominates, the winding number is $n$, and
so we have $N_+ = N_- + n$. 
Theorem \ref{thm:asymp} then implies that $\EE N_+ \sim \EE N_- \sim \frac{c_\alpha}{2} n^{3/2}$.
\end{proof}

The coexistence of many zeros of opposite orientation
suggests that the Jacobian of $F$ changes sign wildly
throughout the complex plane 
(or otherwise that there is a high level of ``condensation'' of zeros 
into regions of common orientation).
Taking this point into consideration, we conclude the introduction by posing the problem
of investigating the topology of the orientation-reversing set $\Omega_- := \{z \in \CC: |p'(z)| < |q'(z)| \}$.
It follows from applying the maximum principle to the harmonic function
$\log |p'(z)| - \log |q'(z)|$
that each connected component of $\Omega_-$ contains at least one critical point of $p$.
This implies that $\Omega_-$ has at most $n-1$ connected components.
What can be said about the \emph{average} number of components of $\Omega_-$?
\begin{figure}
\centering
\includegraphics[scale=.5]{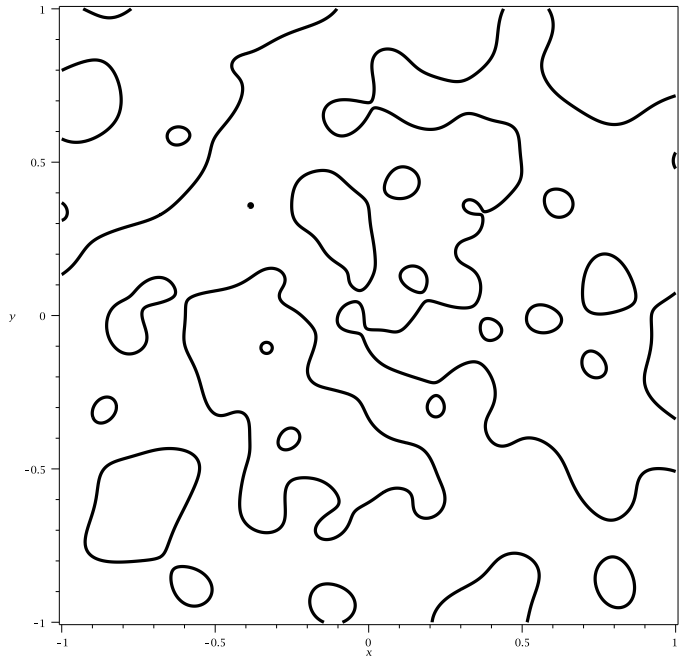}
    \caption{A portion of a random critical lemniscate (the critical set of a random harmonic polynomial) with $m=n=100$.
    Plotted in the region $\{z \in \CC: |\Re z | < 1, |\Im z| <1\}$.
    For $m=n$ the truncated model coincides with the Li-Wei model.}
    \label{fig:lemniscate}
\end{figure}
The critical set (the boundary of $\Omega_-$) is depicted in figure \ref{fig:lemniscate}
for a random sample with $m=n=100$.
Note that the critical set is a random rational lemniscate,
\begin{equation}\label{eq:randcritlem}
 \left\{ z \in \CC : \left| \frac{p'(z)}{q'(z)} \right| = 1 \right\},
\end{equation}
similar to the random lemniscates studied recently by the authors \cite{LLrandlem};
the only difference is that in the model studied in \cite[Sec. 1.2]{LLrandlem},
the numerator $p$ and denominator $q$ of the rational function appear without differentiation.
Based on the results in \cite{LLrandlem} we conjecture that when $m=n\rightarrow \infty$ the
average number of connected components of the random critical lemniscate (\ref{eq:randcritlem}) grows linearly
(the maximum rate possible).

%For $m = \alpha n$ with $0 < \alpha < 1$, 
%it seems reasonable to expect that it has on average $\sim b_\alpha n^{\beta}$ many components
%for some constant $1/2 \leq \beta \leq 1$ independent of $\alpha$.
%Borrowing language from statistical physics, an outcome of $\beta$ close to $1/2$
%would naively indicate ``condensation'' of zeros with common orientation,
%while a value of $\beta$ close to $1$ would suggest a more ``disordered'' system.

\subsection{Outline}
In Section \ref{sec:KacRice}, 
we provide an exact formula for the average number of zeros for the truncated model.
This is derived from a slight modification of \cite{LiWei2009}.
The asymptotics stated in Theorem \ref{thm:asymp} are proved in Section \ref{sec:asymp}.
The proof uses the dominated convergence theorem after factoring out $n^{3/2}$.
Establishing a dominating function requires several elementary estimates,
and determining the pointwise limit of the integrand requires 
asymptotics for a truncated binomial sum.
Such asymptotics are provided in Lemma \ref{lemma:Laplace},
and the proof of Lemma \ref{lemma:Laplace} is given in the separate Section \ref{sec:lemmaproof}.
The proof uses both forms of Laplace's asymptotic method \cite[Sec. 3.3, 3.4]{Miller}: 
namely the case of an interior maximum (saddle-point) as well as the case of an end-point maximum.
The presence of both cases is responsible for the phase-transition in the Kac-Rice density mentioned above
(see also Section \ref{sec:crit}).

\smallskip

\noindent {\bf Acknowledgement.}
We wish to thank Seung-Yeop Lee for inspiring discussions and insightful suggestions and the anonymous referees for their helpful remarks.

\section{An exact formula for $\EE N_f$}\label{sec:KacRice}

Let $P_{m,n}(x) := \sum_{k=0}^m \binom{n}{k} x^k$ denote the binomial expansion of
$(1+x)^n$ truncated at degree $m$.

\begin{thm}\label{thm:exact}
The expectation $\EE N_F(T)$ of the number of zeros of $F_{n,m}(z) = p_n(z) + \overline{q_m(z)}$
on a domain $T \subset \CC$ is given by:
\begin{equation}\label{eq:KR}
\EE N_F(T) = \frac{1}{\pi} \int_{T} \frac{1}{|z|^2} \frac{R_1^2 + R_2^2 - 2 R_{12}^2}{R_3^2 \sqrt{(R_1 + R_2)^2 - 4 R_{12}^2}} dA(z),
\end{equation}
where $dA(z)$ denotes the Lebesgue measure on the plane, and
\begin{align*}
R_{12} &=n^2|z|^4(1+|z|^2)^{n-1} P_{m-1,n-1}(|z|^2), \\
R_3 &= (1+|z|^2)^n +P_{m,n}(|z|^2) , \\
R_1 &=  R_3 (n^2 |z|^4 + n |z|^2)(1+|z|^2)^{n-2} - n^2|z|^4(1+|z|^2)^{2n-2} , \\
R_2 &= R_3 [ n^2 |z|^4 P_{m-2,n-2}(|z|^2) + n|z|^2 P_{m-1,n-2}(|z|^2) ] - n^2|z|^4 [P_{m-1,n-1}(|z|^2)]^2  .
\end{align*}
\end{thm}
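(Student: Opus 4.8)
The plan is to apply the Kac--Rice formula to $F=p_n+\overline{q_m}$, viewed as a Gaussian random field $\CC\to\CC\cong\RR^2$, and then to evaluate all of the resulting Gaussian expectations in closed form; this is exactly the computation carried out by Li and Wei in \cite{LiWei2009}, and the only change needed for the truncated model is in the second moments of $q$. Since the zeros of $F$ are almost surely nondegenerate, the Kac--Rice formula gives
\[
\EE N_F(T)=\int_T \EE\big[\,|J_F(z)|\ \big|\ F(z)=0\,\big]\, p_{F(z)}(0)\, dA(z),
\]
where $J_F(z)=|p'(z)|^2-|q'(z)|^2$ is the Jacobian and $p_{F(z)}(0)$ is the density at $0$ of the $\RR^2$-valued Gaussian $F(z)$. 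The input data is the joint law of the complex Gaussian vector $\big(F(z),\,p'(z),\,\overline{q'(z)}\big)$ at a fixed point $z$ (the complex conjugate on $q'$ reflecting that $\partial F/\partial z=p'$ while $\partial F/\partial\bar z=\overline{q'}$, so that the real gradient of $F$ is encoded by $p'$ and $\overline{q'}$).

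First I would record the covariance structure. By independence of the $a_k$ and $b_k$ and the prescribed variances, $\EE|p(z)|^2=(1+|z|^2)^n$ and $\EE|q(z)|^2=\sum_{k=0}^m\binom{n}{k}|z|^{2k}=P_{m,n}(|z|^2)$, so $\EE|F(z)|^2=(1+|z|^2)^n+P_{m,n}(|z|^2)=R_3$; moreover $F(z)$ is circularly symmetric since $\EE F(z)^2=0$. Differentiating these generating functions, and using $k\binom{n}{k}=n\binom{n-1}{k-1}$ and $k(k-1)\binom{n}{k}=n(n-1)\binom{n-2}{k-2}$ to keep the truncated sums inside the family $P_{\bullet,\bullet}$, one expresses $\EE|p'(z)|^2$, $\EE|q'(z)|^2$, $\EE p(z)\overline{p'(z)}$ and $\EE q(z)\overline{q'(z)}$ in terms of $(1+|z|^2)^n$ and $P_{m-1,n-1}$, $P_{m-1,n-2}$, $P_{m-2,n-2}$ evaluated at $|z|^2$; all cross-moments between the $p$-block and the $q$-block vanish. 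By invariance of the model under $z\mapsto e^{i\theta}z$ (together with the corresponding unitary rotation of the Gaussians), every quantity entering the final answer depends only on $|z|$, which is why the integrand in (\ref{eq:KR}) is radial.

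Next I would carry out the Gaussian regression onto the event $F(z)=0$. Because $F$ is circularly symmetric, conditioning on $\re F=\im F=0$ is immediate, and the conditional law of $\big(p'(z),\overline{q'(z)}\big)$ is centered Gaussian whose (co)variances are obtained by subtracting the rank-one regression corrections $|z|^{-2}$-scaled appropriately; collecting terms, these conditional second moments are exactly $R_1/R_3$, $R_2/R_3$ and $R_{12}/R_3$ as listed in the statement (the extra factor $|z|^{-2}$ in (\ref{eq:KR}) coming from passing between $p',\overline{q'}$ and the Euclidean gradient of $F$). It then remains to evaluate $\EE\big[\,|J_F|\ \big|\ F(z)=0\,\big]$ for the indefinite quadratic form $J_F=(\re p')^2+(\im p')^2-(\re q')^2-(\im q')^2$ in this $4$-dimensional conditioned Gaussian: using the rotational block structure of its covariance one diagonalizes and reduces to a two-dimensional integral of the absolute value of a difference of correlated sums of squares, which integrates to $\dfrac{R_1^2+R_2^2-2R_{12}^2}{R_3^2\sqrt{(R_1+R_2)^2-4R_{12}^2}}$ once one also inserts $p_{F(z)}(0)=1/(\pi R_3)$. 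This last closed-form evaluation is precisely the lemma already used in \cite{LiWei2009}.

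The main obstacle is not conceptual but bookkeeping: one must execute the conditioning correctly (the event $p+\bar q=0$ couples the $p$- and $q$-blocks through complex conjugation, so the regression has to be carried out on the real six-dimensional vector, or handled with matching care in real and imaginary parts), and then track how the various truncated binomial sums recombine into $P_{m,n},P_{m-1,n-1},P_{m-1,n-2},P_{m-2,n-2}$ and into the compact expressions $R_1,R_2,R_{12},R_3$. Since the truncated model (\ref{eq:trunc}) differs from the Li--Wei model (\ref{eq:LW}) only in replacing the variances $\binom{m}{k}$ of the $b_k$ by $\binom{n}{k}$, this replacement affects only \emph{which} binomial sums occur --- the full sums and ``$q$-sums'' of \cite{LiWei2009} becoming the truncations above --- while the remainder of their derivation, including the closed-form Gaussian integral, applies verbatim. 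Hence the proof reduces to checking that this single substitution is consistent throughout \cite{LiWei2009} and re-collecting the resulting terms.
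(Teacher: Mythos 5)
Your proposal follows essentially the same route as the paper: Kac--Rice, Gaussian regression of $(zp',\overline{zq'})$ (equivalently its real and imaginary parts) onto the condition $F(z)=0$ via $R=C-BA^{-1}B^{T}$, the closed-form evaluation of $\EE|U_1^2-U_2^2+V_1^2-V_2^2|$ from \cite[Cor.~2.1]{LiWei2009}, and the density $p(0;z)=1/(\pi R_3)$, with the truncated variances only changing which sums $P_{\bullet,\bullet}$ appear. The one cosmetic difference is the origin of the factor $|z|^{-2}$: in the paper it comes from rewriting $|p'|^2-|q'|^2=|z|^{-2}\left(|zp'|^2-|zq'|^2\right)$ so that the covariances of $zp'$, $zq'$ are exactly the truncated binomial sums, rather than from any passage to the Euclidean gradient, but this does not affect correctness.
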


\noindent Note: The analogous statements contained in \cite[Thm. 1.1, Thm. 4.1]{LiWei2009}
contain a little ambiguity. In fact the authors use the Kac-Rice formula for the harmonic function \emph{already} in polar coordinates, 
thus viewing it as a random field defined over $[0, 2\pi)\times (0, \infty)$ and with values in $\RR^2$. 
In particular  \cite[Equation (1.1)]{LiWei2009} should either be modified with $|z|^2$ instead of $|z|$ (and $d\sigma(z)$ is still the Lebesgue measure on the complex plane) 
or the integration should be performed over the image of $T\subset \CC$ under the polar change of coordinates 
(and in this case $|z|=\rho$). In other words, denoting by $\psi:\mathbb{C}\backslash \{0\}\to (0, 2\pi)\times (0, \infty)$ the polar change of coordinates, 
the right expression for  \cite[Equation (1.1)]{LiWei2009} is:
\begin{equation}  
\frac{1}{\pi} \int_{T} \frac{1}{|z|^2} \frac{r_1^2 + r_2^2 - 2 r_{12}^2}{r_3^2 \sqrt{(r_1 + r_2)^2 - 4 r_{12}^2}} d\sigma(z)
=\frac{1}{\pi}\int_{\psi(T)} \frac{1}{\rho^2} \frac{r_1^2 + r_2^2 - 2 r_{12}^2}{r_3^2 \sqrt{(r_1 + r_2)^2 - 4 r_{12}^2}} \rho d\rho d\theta 
\end{equation}
This ambiguity is no longer present in their asymptotic analysis.

\begin{proof}
We follow closely the lines of the proof given in \cite[Thm. 1.1, Thm. 4.1]{LiWei2009},
adjusting certain computations as needed.
Also, we simplify the first part of their proof by not switching to polar coordinates
while obtaining equations (\ref{eq:Jac}) and (\ref{eq:quadform}) below.

Applying the Kac-Rice formula (restated in \cite[Lemma 2.1]{LiWei2009}), we have:
\begin{equation}\label{eq:KRlemma}
 \EE N_F(T) = \int_T \EE \left( | \det J_F(z)|  \big| F(z) = 0 \right) p(0;z) dA(z),
\end{equation}
where, for each $z$, $p(s;z)$ is the probability density function of the random variable $s=F(z)$.

The modulus of the Jacobian determinant of $F(z) = p(z) + \ol{q(z)}$ is given by (see \cite[Sec. 1.2]{Duren})
\begin{equation}\label{eq:Jac}
| J_F(z) | = \left| |p'(z)|^2 - |q'(z)|^2 \right| = \frac{1}{|z|^2} \left| |z p'(z)|^2 - |z q'(z)|^2 \right| ,
\end{equation}
and hence we have
\begin{equation}\label{eq:quadform}
 \EE \left( | \det J_F(z)|  \big| F(z) = 0 \right) = \frac{1}{|z|^2} \EE \left( |u_1^2 - u_2^2 + v_1^2 - v_2^2|  \big| u_3 = 0 , v_3 = 0 \right),
\end{equation}
where, for $j=1,2,3$, the expressions $u_j,v_j$ are given by
\begin{align*}
 u_1 &= \Re \sum_{k=0}^{n} k a_k z^{k}  , \quad v_1 = \Im\sum_{k=0}^n k a_k z^{k} , \\
 u_2 &= \Re \sum_{k=0}^{m} k b_k z^{k}  , \quad v_2 = \Im\sum_{k=0}^m k b_k z^{k} , \\
 u_3 &= \Re \left( p_n(z) + \ol{q_m(z)} \right) , \quad v_3 = \Im \left( p_n(z) + \ol{q_m(z)} \right).
\end{align*}

Then letting $(U_1,U_2,V_1,V_2)$ denote the Gaussian vector 
that has the distribution of $(u_1,u_2,v_1,v_2)$
under the (linear) condition $u_3=0,v_3=0$, we have
\begin{equation}\label{eq:smallbig}
 \EE \left( |u_1^2 - u_2^2 + v_1^2 - v_2^2|  \big| u_3 = 0 , v_3 = 0 \right) = \EE \left( |U_1^2 - U_2^2 + V_1^2 - V_2^2|  \right).
\end{equation}
The covariance matrix $R$ of $(U_1,U_2,V_1,V_2)$ is given by \cite[p. 30]{T90} 
\begin{equation}\label{eq:covR}
 R = C - B A^{-1} B^T ,
\end{equation}
where
\begin{align*}
 A_{2 \times 2} &= \text{cov}(u_3,v_3), \\
 B_{4 \times 2} &= \text{cov}((u_1,u_2,v_1,v_2),(u_3,v_3)), \\
 C_{4 \times 4} &= \text{cov}(u_1,u_2,v_1,v_2).
\end{align*}
First we compute 
\begin{align*}
 \EE u_3^2 = \EE v_3^2 &= \frac{1}{2} \sum_{k=0}^n \binom{n}{k} |z|^{2k} + \frac{1}{2} \sum_{k=0}^m \binom{n}{k} |z|^{2k} = \frac{1}{2}(1+|z|^2)^n + \frac{1}{2}P_{m,n}(|z|^2), \\
 \EE u_1 u_3 = \EE v_1 v_3 &= \frac{1}{2} \sum_{k=0}^n k \binom{n}{k} |z|^{2k} = \frac{1}{2} n |z|^2(1+|z|^2)^{n-1}, \\
 \EE u_1^2 = \EE v_1^2 &= \frac{1}{2} \sum_{k=0}^n k^2 \binom{n}{k} |z|^{2k} = \frac{1}{2} (n^2 |z|^4 + n |z|^2)(1+|z|^2)^{n-2}, \\
 \EE u_2 u_3 = \EE v_2 v_3 &= \frac{1}{2} \sum_{k=0}^m k \binom{n}{k} |z|^{2k} = \frac{1}{2} n |z|^2 P_{m-1,n-1}(|z|^2), \\
 \EE u_2^2 = \EE v_2^2 &= \frac{1}{2} \sum_{k=0}^m k^2 \binom{n}{k} |z|^{2k} = \frac{1}{2} (n^2 |z|^4 + n |z|^2) P_{m-2,n-2}(|z|^2), 
\end{align*}
and hence 
\begin{align*}
 A_{2 \times 2} &= \frac{(1+|z|^2)^n + P_{m,n}(|z|^2)}{2} \begin{pmatrix}
1 & 0  \\
 0 & 1 \\
\end{pmatrix}, \\
 B_{4 \times 2} &= \frac{1}{2} \begin{pmatrix}
 n|z|^2 (1+|z|^2)^{n-1} & 0  \\
 n |z|^2 P_{m-1,n-1}(|z|^2)& 0  \\
0 &  n |z|^2(1+|z|^2)^{n-1} \\
0 & n |z|^2 P_{m-1,n-1}(|z|^2)\\
\end{pmatrix}, \\
 C_{4 \times 4} &= \frac{1}{2} \text{diag} ( (n^2|z|^2 + n)(1+|z|^2)^{n-2} , (n^2|z|^2 + n)P_{m-2,n-2}(|z|^2) , \\
 & \quad \quad \quad \quad \quad (n^2|z|^2 + n)(1+|z|^2)^{n-2}, (n^2|z|^2 + n)P_{m-2,n-2}(|z|^2) ).
\end{align*}
From these we compute (\ref{eq:covR}):
$$ R_{4 \times 4} = \frac{1}{2 R_3} \begin{pmatrix}
 R_1 & -R_{12} & 0 & 0  \\
 -R_{12} & R_{2} & 0 & 0  \\
 0 & 0  & R_{1} & -R_{12}  \\
 0 & 0  & -R_{12} & R_{2} \\
\end{pmatrix}, $$
where
\begin{align*}
R_{12} &=n^2|z|^4(1+|z|^2)^{n-1} P_{m-1,n-1}(|z|^2), \\
R_3 &= (1+|z|^2)^n +P_{m,n}(|z|^2) , \\
R_1 &=  R_3 (n^2 |z|^4 + n |z|^2)(1+|z|^2)^{n-2} - n^2|z|^4(1+|z|^2)^{2n-2} , \\
R_2 &= R_3 [ n^2 |z|^4 P_{m-2,n-2}(|z|^2) + n|z|^2 P_{m-1,n-2}(|z|^2) ] - n^2|z|^4 [P_{m-1,n-1}(|z|^2)]^2  .
\end{align*}
Applying \cite[Cor. 2.1]{LiWei2009}, we obtain:
\begin{equation}\label{eq:applycor}
 \EE \left| U_1^2 - U_2^2 + V_1^2 - V_2^2 \right| = \frac{1}{R_3} \frac{R_1^2 + R_2^2 - 2 R_{12}^2}{\sqrt{(R_1+R_2)^2 - 4 R_{12}^2 }} .
\end{equation}

For each fixed $z$, the complex Gaussian $s=F(z)$ has 
probability density function
$$p(s;z) = \frac{1}{\pi R_3 } \exp \{ - |s|^2 / R_3 \},$$
and in particular
$$p(0;z) = \frac{1}{\pi R_3}.$$
Applying this along with equations (\ref{eq:quadform}), (\ref{eq:smallbig}), and (\ref{eq:applycor}) to
the Kac-Rice formula (\ref{eq:KRlemma}) we obtain the desired result (\ref{eq:KR}).
\end{proof}

\section{Proof of Theorem \ref{thm:asymp}}\label{sec:asymp}

\subsection{The case when $m = \alpha n$}

Applying Theorem \ref{thm:exact} with $N_F:=N_F(\CC)$, switching to polar coordinates $r = |z|$, $dA(z) = r dr d\theta$,
and integrating out the angular variable $\theta$, we are left with:
$$ \EE N_F = 2 \int_{0}^{\infty} \frac{1}{r} \frac{a_1^2 + a_2^2 - 2 a_{12}^2}{a_3^2 \sqrt{(a_1 + a_2)^2 - 4 a_{12}^2}} dr, $$
where
$$a_{12} = n^2r^4(1+r^2)^{n-1} P_{m-1,n-1}(r^2) ,$$
$$a_3 = (1+r^2)^n +P_{m,n}(r^2) ,$$
$$a_1 = a_3 (n^2r^4+nr^2) (1+r^2)^{n-2} - n^2r^4(1+r^2)^{2n-2} ,$$
$$a_2 = a_3 [ n^2 r^4 P_{m-2,n-2}(r^2) + nr^2 P_{m-1,n-2}(r^2) ] - n^2r^4 [P_{m-1,n-1}(r^2)]^2 .$$

Factoring $(1+r^2)^{4n-4}$ from the numerator and $(1+r^2)^{4n-2}$ from the denominator, we have:
\begin{equation}\label{eq:factored}
 \EE N_F = 2 n^{3/2} \int_{0}^{\infty} \frac{1}{n^{1/2} r (1+r^2)^2} \frac{b_1^2 + b_2^2 - 2 b_{12}^2}{b_3^2 \sqrt{(b_1 + b_2)^2 - 4 b_{12}^2}} dr,
 %2n^{3/2} \int_{0}^{\infty} \frac{(1+r^2)^{4n-4}}{n^{1/2} r  (1+r^2)^{4n-2}} \frac{b_1^2 + b_2^2 - 2 b_{12}^2}{b_3^2 \sqrt{(b_1 + b_2)^2 - 4 b_{12}^2}} dr\\
\end{equation}
where
$$b_{12} = nr^4 \frac{P_{m-1,n-1}(r^2)}{(1+r^2)^{n-1}} ,$$
$$b_3 = 1 + \frac{P_{m,n}(r^2)}{(1+r^2)^n} ,$$
$$b_1 = b_3[nr^4 + r^2] - nr^4 ,$$
$$b_2 = b_3 \left[ n r^4 \frac{P_{m-2,n-2}(r^2)}{(1+r^2)^{n-2}} + r^2 \frac{P_{m-1,n-2}(r^2)}{(1+r^2)^{n-2}} \right] - nr^4 \left[\frac{P_{m-1,n-1}(r^2)}{(1+r^2)^{n-1}}\right]^2 .$$

%Factoring out $n^{3/2}$ from the integrand, 
%$$ N_F = 2 n^{3/2} \int_{0}^{\infty} \frac{1}{n^{3/2} r} \frac{a_1^2 + a_2^2 - 2 a_{12}^2}{a_3^2 \sqrt{(a_1 + %a_2)^2 - 4 a_{12}^2}} dr, $$
We will apply Lebesgue's dominated convergence theorem to take the limit of the integral appearing in (\ref{eq:factored}).
The following claim implies that the sequence of integrands in (\ref{eq:factored}) is bounded by a single integrable function.

\noindent {\bf Claim:} 
$$  \frac{b_1^2 + b_2^2 - 2 b_{12}^2}{b_3^2 \sqrt{(b_1 + b_2)^2 - 4 b_{12}^2}} = O(\sqrt{n} r^3), \quad \text{ as } n \rightarrow \infty.$$
\begin{proof}[Proof of Claim]
First we note that $a_1 a_2 \geq a_{12}^2$.
This is by the Cauchy-Schwarz inequality, 
since it follows from the proof of Theorem \ref{thm:exact} that
$a_1 = \EE U_1^2, a_2 = \EE U_2^2,$ and $a_{12} = \EE U_1 U_2$,
where $U_1$ and $U_2$ are Gaussian random variables.

This implies that $b_1 b_2 \geq b_{12}^2$. Since $b_3 \geq 1$, we have:
\begin{align*}
 \frac{b_1^2 + b_2^2 - 2 b_{12}^2}{b_3^2 \sqrt{(b_1 + b_2)^2 - 4 b_{12}^2}} &\leq  \sqrt{b_1^2 + b_2^2 - 2 b_{12}^2} \\ 
 &= \sqrt{(b_1 - b_{2})^2 + 2 ( b_1 b_2 - b_{12}^2) } \\
 &\leq \sqrt{(b_1 - b_{2})^2 } + \sqrt{ 2(b_1 b_2 - b_{12}^2)} \\
 &= |b_1 - b_{2}| + \sqrt{2} \sqrt{ b_1 b_2 - b_{12}^2}
% &\leq  \frac{\sqrt{a_1^2 - a_{12}^2}}{n^{3/2} r a_3^2} \\
% &\leq  \frac{\sqrt{(a_1 + a_{12})(a_1 - a_{12})}}{n^{3/2} r (1+r^2)^{2n}} \\
% &=  \sqrt{\frac{a_1 + a_{12}}{ n^2r^2 (1+r^2)^{2n}} \frac{a_1 - a_{12}}{n (1+r^2)^{2n}} }.
\end{align*}

Thus, it suffices to show that
\beq \label{eq:ists1}
b_1 - b_{2} = O(\sqrt{n} + r^2),
\eeq
and\
\beq \label{eq:ists2}
b_1 b_2 - b_{12}^2 = O( n r^6).
\eeq

Let $q_{m,n} := \frac{P_{m,n}(r^2)}{(1+r^2)^n}$.
Then, $b_3 = 1 + q_{m,n}$, and we have:
$$b_1 = (1+q_{m,n}) (nr^4+r^2)-nr^4,$$
and
$$b_2 = (1+q_{m,n}) \left( nr^4 q_{m-2,n-2} + r^2 q_{m-1,n-2} \right)-nr^4 q_{m-1,n-1}^2.$$
These lead to:
\begin{align*}
  b_1 - b_{2} = n r^4 &\left( (1 + q_{m,n} )(1 - q_{m-2,n-2}) - (1 - q_{m-1,n-1}^2) \right) \\
  &+ (1+q_{m,n}) r^2 \left( 1- q_{m-1,n-2} \right).
\end{align*}
The term $(1+q_{m,n}) r^2 \left( 1- q_{m-1,n-2} \right)$ is bounded by $2 r^2$.
We consider the remaining term $n r^4 \left( (1 + q_{m,n} )(1 - q_{m-2,n-2}) - (1 - q_{m-1,n-1}^2)  \right)$
which can be rewritten as:
\begin{align*}
  & n r^4 \left( (1 + q_{m,n} )(q_{m-1,n-1} - q_{m-2,n-2}) + (1 - q_{m-1,n-1}) (q_{m,n} - q_{m-1,n-1}) \right) \\
  \leq& n r^4 \left( 2 (q_{m-1,n-1} - q_{m-2,n-2}) + (q_{m,n} - q_{m-1,n-1}) \right)  \\
 =&   n r^4 \left( 2 \binom{n-2}{m-1} \frac{r^{2(m-1)}}{(1+ r^2)^{n-1}} + \binom{n-1}{m} \frac{r^{2m}}{(1+ r^2)^n} \right)  \\
 \leq&   n r^4 \left( 2 \binom{n-2}{m-1} + \binom{n-1}{m} \right) \frac{r^{2(m-1)}}{(1+ r^2)^{n-1}}   \\
 \leq&   3 n \binom{n-1}{m} \frac{r^{2(m+1)}}{(1+ r^2)^{n-1}} ,
\end{align*}
where we have used the identity 
\beq\label{eq:identity}
q_{m,n} - q_{m-1,n-1} = \binom{n-1}{m} \frac{r^{2m}}{(1+ r^2)^n} ,
\eeq
which can be seen as follows
\begin{align*}
q_{m,n} - q_{m-1,n-1} &= \frac{\sum_{k=0}^{m} \binom{n}{k} r^{2k} - (1+ r^2) \sum_{k=0}^{m-1} \binom{n-1}{k}r^{2k} }{(1+r^2)^n} \\
&= \frac{\sum_{k=1}^{m-1} \left( \binom{n}{k} - \binom{n-1}{k} - \binom{n-1}{k-1} \right) r^{2k} }{(1+r^2)^n} +  \binom{n-1}{m} \frac{r^{2m}}{(1+ r^2)^n} \\
&=  \binom{n-1}{m} \frac{r^{2m}}{(1+ r^2)^n}.
\end{align*}

Applying the first derivative test to $\frac{x^{m+1}}{(1+ x)^{n-1}}$ over the interval $x>0$ we find that the maximum occurs at $x=\frac{m+1}{n-m-2}$.
Thus, we have:
\begin{align*}
   3 n \binom{n-1}{m} \frac{r^{2(m+1)}}{(1+ r^2)^{n-1}}  &\leq   3 n \binom{n-1}{m} \frac{(\frac{m+1}{n-m-2})^{m+1}}{(\frac{n-1}{n-m-2})^{n-1}} \\
 &\leq   3 n \binom{n-1}{m} \frac{(m+1)^{m+1} (n-m-2)^{n-m-2}}{(n-1)^{n-1}} \\
 &=  3 n \frac{m+1}{n-1-m} \binom{n-1}{m+1} \frac{(m+1)^{m+1} (n-m-2)^{n-m-2}}{(n-1)^{n-1}}\\
 &\leq  C n \frac{m+1}{n-1-m} \sqrt{\frac{n-1}{(m+1)(n-m-2)}} = O(n^{1/2}),
\end{align*}
where we have used Stirling's approximation while
recalling that $m = \alpha n$.

This establishes (\ref{eq:ists1}).

\bigskip

Next we consider $b_1 b_2 - b_{12}^2$.

We have:
$$ b_1 b_2 - b_{12}^2 = n^2 r^8 \left( q_{m,n}\left[ (1+q_{m,n})q_{m-2,n-2} - q_{m-1,n-1}^2 \right] -q_{m-1,n-1}^2\right) $$
$$ + (1 + q_{m,n}) r^2 \left( b_2  + \frac{P_{m-1,n-2}(r^2)}{(1+r^2)^{n-2}} q_{m,n} \right).$$

Part of this can be estimated as follows:
$$(1 + q_{m,n}) r^2 \left( b_2  + \frac{P_{m-1,n-2}(r^2)}{(1+r^2)^{n-2}} q_{m,n} \right) \leq 2 r^2 (2(nr^4+ r^2) + 1) = O(nr^6).$$

Since $b_1 b_2 - b_{12}^2 \geq 0$, in order to prove (\ref{eq:ists2}), it is enough to show that for the remaining terms we have:
$$n^2 r^8 \left( q_{m,n}\left[ (1+q_{m,n})q_{m-2,n-2} - q_{m-1,n-1}^2 \right] -q_{m-1,n-1}^2\right) \leq 0.$$
We notice that
\begin{align*}  
 & \left( q_{m,n}\left[ (1+q_{m,n})q_{m-2,n-2} - q_{m-1,n-1}^2 \right] -q_{m-1,n-1}^2\right) \\
   =& \left( q_{m,n}(1+q_{m,n})q_{m-2,n-2} - (1+q_{m,n}) q_{m-1,n-1}^2 \right) \\ 
   =& \left(1+q_{m,n} \right) \left( q_{m,n}q_{m-2,n-2} - q_{m-1,n-1}^2 \right).
\end{align*}
We will show that $q_{m,n}q_{m-2,n-2} - q_{m-1,n-1}^2 \leq 0 $.

Using again the identity (\ref{eq:identity}), we have:
\begin{align*}
q_{m,n}q_{m-2,n-2} - q_{m-1,n-1}^2 &= q_{m,n} \left( q_{m-1,n-1} - \binom{n-2}{m-1} \frac{r^{2(m-1)}}{(1+r^2)^{n-1}} \right) - q_{m-1,n-1}^2  \\
&= q_{m-1,n-1}  \left( q_{m,n} - q_{m-1,n-1} \right)  - q_{m,n} \binom{n-2}{m-1} \frac{r^{2(m-1)}}{(1+r^2)^{n-1}}  \\
&= q_{m-1,n-1}  \binom{n-1}{m} \frac{r^{2m}}{(1+r^2)^{n}}   - q_{m,n} \binom{n-2}{m-1} \frac{r^{2(m-1)}}{(1+r^2)^{n-1}}  \\
&= \frac{\binom{n-2}{m-1} r^{2(m-1)}}{(1+r^2)^{2n-1}}  \left[ r^2 \frac{n-1}{m} P_{m-1,n-1}(r^2) - P_{m,n}(r^2) \right].
\end{align*}

Finally, we have:
$$ r^2 \frac{n-1}{m} P_{m-1,n-1}(r^2) - P_{m,n}(r^2) = -1 + \sum_{j=1}^{m} \left( \frac{n-1}{m} \binom{n-1}{j-1} - \binom{n}{j}\right) r^{2j} ,$$
and we see that each coefficient $\frac{n-1}{m} \binom{n-1}{j-1} - \binom{n}{j} = \binom{n-1}{j-1} \left( \frac{n-1}{m} - \frac{n}{j}  \right) $ is negative.

\end{proof}

Having justified an application of Lebesgue's dominated convergence theorem,
we find the pointwise limit of the integrand in (\ref{eq:factored}) using the following asymptotic 
(whose proof is given in Section \ref{sec:lemmaproof}).

\begin{lemma}\label{lemma:Laplace}
 Let $x \geq 0$.  For all $0<\alpha < 1$, we have (as $n \rightarrow \infty$ with $m=\alpha n$):
 $$\frac{P_{m,n}(x)}{(1+x)^n} = \left\{
\begin{array}{cc}
1+ O(1/n), \quad 0 \leq x < \frac{\alpha}{1-\alpha} ,\\
O(\exp\{-c n\}), \quad x > \frac{\alpha}{1-\alpha}. \\
\end{array} \right.
 $$
\end{lemma}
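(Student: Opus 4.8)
The plan is to analyze the ratio $P_{m,n}(x)/(1+x)^n = \sum_{k=0}^m \binom{n}{k} x^k / \sum_{k=0}^n \binom{n}{k} x^k$ via Laplace's method, treating the numerator as a partial sum of the full binomial expansion. Writing $k = tn$ and using Stirling's approximation, the general term behaves like $\exp\{n \phi_x(t)\}$ where $\phi_x(t) = -t\log t - (1-t)\log(1-t) + t\log x - \log(1+x)$ (the last term coming from normalizing by $(1+x)^n$). The function $\phi_x$ is concave in $t$ on $(0,1)$ with a unique interior critical point at $t^* = x/(1+x)$, and one checks $\phi_x(t^*) = 0$: this is just the reflection of the fact that the full sum equals $(1+x)^n$. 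So the location of the maximum of the exponent relative to the truncation point $t = \alpha$ is exactly governed by whether $\alpha \lessgtr x/(1+x)$, equivalently whether $x \lessgtr \alpha/(1-\alpha)$, which explains the threshold in the statement.

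First I would handle the subcritical regime $0 \le x < \alpha/(1-\alpha)$, i.e. $t^* < \alpha$. Here the interior maximum $t^*$ lies strictly inside the summation range $[0,\alpha]$, so the truncated sum captures essentially the whole mass: $P_{m,n}(x)$ differs from $(1+x)^n$ only by the tail $\sum_{k>m}\binom{n}{k}x^k$, which by the standard Chernoff/large-deviation estimate for the binomial is $O(\exp\{-cn\}) \cdot (1+x)^n$ for some $c = c(\alpha,x) > 0$ (uniformly on compact subsets of $x < \alpha/(1-\alpha)$). Hence $P_{m,n}(x)/(1+x)^n = 1 - O(e^{-cn})$. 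The claimed sharper error $1 + O(1/n)$ is actually weaker than what this argument gives for fixed $x$, but near the endpoint $x=0$ one must be slightly careful: there $P_{m,n}(x)/(1+x)^n = 1 - O(x^{m+1})$-type behavior, and a uniform $O(1/n)$ (rather than $O(e^{-cn})$) bound holds on the full half-open interval including $x$ close to the threshold where $c \to 0$. I would make the estimate uniform by the saddle-point form of Laplace's method (the interior-maximum case, \cite[Sec.~3.3]{Miller}) applied to the tail, extracting the $\sqrt{n}$ from the Gaussian width and noting the tail is then $O(n^{-1/2}e^{-cn})$, which is absorbed into $O(1/n)$.

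For the supercritical regime $x > \alpha/(1-\alpha)$, i.e. $t^* > \alpha$, the maximum of $\phi_x$ over the \emph{truncated} range $[0,\alpha]$ is attained at the right endpoint $t = \alpha$, with $\phi_x(\alpha) < \phi_x(t^*) = 0$, so $\phi_x(\alpha) = -c < 0$. This is the endpoint-maximum case of Laplace's method (\cite[Sec.~3.4]{Miller}): the sum $P_{m,n}(x)$ is dominated by its last few terms near $k = m$, giving $P_{m,n}(x) \asymp \binom{n}{m} x^m \asymp (1+x)^n e^{-cn}/\sqrt{n}$ up to polynomial factors, and therefore $P_{m,n}(x)/(1+x)^n = O(e^{-cn})$ (absorbing the $n^{-1/2}$). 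Concretely, $\binom{n}{m}x^m \le (1+x)^n$ times the ratio, and one bounds the sum by a geometric series with ratio $\binom{n}{k-1}x^{k-1}/(\binom{n}{k}x^k) = \frac{k}{(n-k+1)x}$, which at $k=m=\alpha n$ equals $\frac{\alpha}{(1-\alpha)x} < 1$ by hypothesis, so $P_{m,n}(x) \le \binom{n}{m}x^m \cdot \frac{1}{1 - \alpha/((1-\alpha)x)}$; then the elementary bound $\binom{n}{\alpha n}x^{\alpha n} \le \exp\{n(-\alpha\log\alpha - (1-\alpha)\log(1-\alpha) + \alpha\log x)\} = (1+x)^n e^{n\phi_x(\alpha)}$ (via $\binom{n}{\alpha n} \le e^{nH(\alpha)}$ with $H$ the binary entropy) finishes it with $c = -\phi_x(\alpha)$.

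The main obstacle is the uniformity in the subcritical case near the endpoints of the interval $[0, \alpha/(1-\alpha))$ — both near $x = 0$ (where the correction to $1$ is genuinely $O(x^{m+1})$ and one wants to see it is $O(1/n)$ or better, which is automatic but needs a clean statement) and, more seriously, near $x = \alpha/(1-\alpha)$ from below, where the large-deviation rate $c(\alpha,x) \to 0$ so the $O(e^{-cn})$ claim degrades; there one needs the finer saddle-point expansion showing the tail is $O(n^{-1/2})$ even when $c = 0$ (i.e., a local central limit / Berry–Esseen estimate for the binomial tail at the mean), which yields the stated $O(1/n)$ after accounting for the discrepancy between $m/n$ and $t^*$ being $O(1/n)$ away. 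In the write-up I would either prove the two regimes pointwise (sufficient for applying dominated convergence in Section~\ref{sec:asymp}, where only the a.e.\ pointwise limit of the integrand is needed) and remark that uniformity can be obtained by the quoted forms of Laplace's method, or push through the uniform version directly; the former is lighter and adequate for Theorem~\ref{thm:asymp}.
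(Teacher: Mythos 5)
Your proof is correct, but it takes a genuinely different route from the paper. The paper does not analyze the sum directly: it invokes an integral representation due to Ostrovskii (Lemma \ref{lemma:Pform}),
\begin{equation*}
\frac{P_{m,n}(x)}{(1+x)^n} = \binom{n}{m}(n-m)\int_{x/(x+1)}^1 u^m(1-u)^{n-m-1}\,du,
\end{equation*}
and applies the continuous Laplace method to this integral with phase $h(u)=\alpha\log u+(1-\alpha)\log(1-u)$: the saddle $u=\alpha$ lies inside the interval of integration exactly when $x/(x+1)<\alpha$, i.e.\ $x<\alpha/(1-\alpha)$, giving $1+O(1/n)$ after Stirling, and otherwise the maximum sits at the endpoint $u=x/(x+1)$, giving exponential decay. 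You instead treat $P_{m,n}(x)/(1+x)^n$ as the distribution function of a $\mathrm{Bin}(n,x/(1+x))$ variable at $m=\alpha n$, using a Chernoff bound for the tail in the subcritical regime and geometric-series domination by the top term $\binom{n}{m}x^m$ in the supercritical regime. Both arguments locate the same threshold through the same interior-versus-endpoint dichotomy; yours is more elementary and self-contained (no external representation needed) and even gives the stronger estimate $1-O(e^{-cn})$ for fixed $x<\alpha/(1-\alpha)$, while the paper's route produces the precise prefactors (e.g.\ the $n^{-1/2}e^{-c_2(x,\alpha)n}$ form in the supercritical case) that it reuses for the Kac--Rice density in Section \ref{sec:crit}. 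Your worry about uniformity near the critical radius is unnecessary for the lemma as stated and for its application: the statement is pointwise in $x$, and the dominated convergence argument in Section \ref{sec:asymp} needs only the pointwise limit, the dominating function being supplied by a separate claim --- as you yourself observe at the end. (A trivial slip: you have Miller's two sections swapped; in the paper Sec.~3.4 is the interior-maximum case and Sec.~3.3 the endpoint case.)
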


According to this asymptotic, for $r^2 > \frac{\alpha}{1-\alpha}$,
the integrand in Equation (\ref{eq:factored}) converges to zero,
and for $0 < r^2 < \frac{\alpha}{1-\alpha}$, we see that $b_2 = b_1 (1+O(1/n))$,
and
$$\frac{b_1^2 + b_2^2 - 2 b_{12}^2}{n^{1/2} b_3^2 \sqrt{(b_1 + b_2)^2 - 4 b_{12}^2}} = \frac{\sqrt{b_1^2 - b_{12}^2}}{n^{1/2} b_3^2} (1+O(1/n)) \sim \frac{r^3}{2} .$$

Thus, we have
$$ N_F \sim n^{3/2} \int_{0}^{\sqrt{\alpha/(1-\alpha)}} \frac{r^2}{(1+r^2)^2} dr = n^{3/2} c_\alpha,$$
where 
\begin{equation*}
c_\alpha =  \int_{0}^{ \sqrt{\alpha/(1-\alpha)} } \frac{r^2}{(1+r^2)^2} dr .
\end{equation*}
In order to determine $c_\alpha$, 
we make the change of variable $r = \tan (\theta), dr = \sec^2 (\theta) d \theta$:
\begin{equation*}
\int_{0}^{ \sqrt{\alpha/(1-\alpha)} } \frac{r^2}{(1+r^2)^2} dr = \int_{0}^{A} \sin^2(\theta) d\theta, \quad A = \arctan \left( \sqrt{\frac{\alpha}{1-\alpha}} \right).
\end{equation*}
Thus, we have
\begin{equation*}
c_\alpha = \frac{1}{2} \left( \arctan \left( \sqrt{ \frac{\alpha}{1-\alpha} }\right) - \sqrt{\alpha(1-\alpha)} \right).
\end{equation*}
This completes the proof of Theorem \ref{thm:asymp} in the case that $m = \alpha n$ with $0<\alpha<1$.

\subsection{The case when $n \rightarrow \infty$ with $m$ fixed}

This case is simpler and does not require Lemma \ref{lemma:Laplace}.
Omitting the details, 
we find that $b_2, b_{12}$, converge to zero, $b_3$ converges to $1$,
and
$$ N_F \sim 2 n \int_{0}^{\infty} \frac{r}{(1+r^2)^2} dr = n. $$

\subsection{Asymptotics of the Kac-Rice density}\label{sec:crit}

Consider again the case when $m = \alpha n$, with $0< \alpha < 1$.
Above, we have factored out $n^{3/2}$ from the Kac-Rice density in order to apply the dominated convergence theorem,
but Lemma \ref{lemma:Laplace} can also be used to find the pointwise asymptotic.
The Kac-Rice density is asymptotic (as $n \rightarrow \infty$) to $\frac{n^{3/2} |z|}{2\pi (1+|z|^2)^2} $
for $|z| < \sqrt{ \alpha/(1-\alpha) }$,
and it is asymptotic to $\frac{n}{\pi (1+|z|^2)^2} $ for $|z| > \sqrt{ \alpha/(1-\alpha) }$.
Thus, the leading contribution of zeros are located within the distance $\sqrt{\alpha/(1-\alpha) }$ from the origin.
This critical radius originates in the proof of Lemma \ref{lemma:Laplace}
(based on Laplace's method), 
see Cases 1 and 2 in Section \ref{sec:lemmaproof}.

\section{Proof of Lemma \ref{lemma:Laplace} using Laplace's method}\label{sec:lemmaproof}

The following formula is provided in \cite[Lemma 1]{Ostr2004}.

\begin{lemma}\label{lemma:Pform}
For $0<m<n-1$
 \begin{equation}\label{eq:asympt}\frac{P_{m,n}(x)}{(1+x)^n} = \binom{n}{m}(n-m) \int_{x/(x+1)}^1 u^m(1-u)^{n-m-1} du .\end{equation}
\end{lemma}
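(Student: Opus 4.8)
To prove Lemma~\ref{lemma:Pform} the plan is to recognize the left-hand side as a truncated \emph{binomial sum} and then establish (rather than merely cite) the classical identity that rewrites a binomial tail as a regularized incomplete Beta integral. Setting $p = x/(x+1)\in[0,1)$, so that $1-p = 1/(x+1)$, we have
\[
\frac{P_{m,n}(x)}{(1+x)^n} = \sum_{k=0}^m \binom{n}{k}\frac{x^k}{(1+x)^n} = \sum_{k=0}^m \binom{n}{k} p^k(1-p)^{n-k},
\]
and under $x\mapsto p$ the lower limit $x/(x+1)$ of the integral becomes $p$. Hence Lemma~\ref{lemma:Pform} is equivalent to the identity
\[
\sum_{k=0}^m \binom{n}{k} p^k(1-p)^{n-k} = \binom{n}{m}(n-m)\int_p^1 u^m(1-u)^{n-m-1}\,du, \qquad p\in[0,1],
\]
i.e.\ the statement that the cumulative distribution function of a $\mathrm{Binomial}(n,p)$ random variable, evaluated at $m$, equals a regularized incomplete Beta function.

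I would prove this identity by differentiating both sides in $p$ and matching. Both sides are smooth on $[0,1]$ and both vanish at $p=1$: the left side because $m<n$ forces every summand to carry a positive power of $1-p$, and the right side because the integral runs over the degenerate interval $[1,1]$. Differentiating the right side gives $-\binom{n}{m}(n-m)\,p^m(1-p)^{n-m-1}$. For the left side, apply $k\binom{n}{k}=n\binom{n-1}{k-1}$ and $(n-k)\binom{n}{k}=n\binom{n-1}{k}$ and reindex the first resulting sum:
\[
\frac{d}{dp}\sum_{k=0}^m \binom{n}{k}p^k(1-p)^{n-k} = n\sum_{j=0}^{m-1}\binom{n-1}{j}p^j(1-p)^{n-1-j} - n\sum_{k=0}^{m}\binom{n-1}{k}p^k(1-p)^{n-1-k},
\]
and the two sums on the right cancel term by term except for the $k=m$ term, leaving $-n\binom{n-1}{m}p^m(1-p)^{n-1-m}$. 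Since $n\binom{n-1}{m} = \binom{n}{m}(n-m)$, the two derivatives agree; as the two sides also agree at $p=1$, the identity holds throughout $[0,1]$, and specializing $p = x/(x+1)$ yields the lemma.

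An alternative, which is essentially Ostrowski's original argument, is to integrate $\int_p^1 u^m(1-u)^{n-m-1}\,du$ by parts repeatedly: one integration by parts (differentiating $u^m$, integrating $(1-u)^{n-m-1}$) peels off the boundary term $\tfrac{1}{n-m}p^m(1-p)^{n-m}$ and leaves an integral of the same shape with $m$ replaced by $m-1$; after $m+1$ steps, tracking the multiplicative constants, one recovers exactly $\sum_{k=0}^m\binom{n}{k}p^k(1-p)^{n-k}$, the last step using $\int_p^1(1-u)^{n-1}\,du = (1-p)^n/n$.

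There is no genuine difficulty here — the entire content is bookkeeping. The only points requiring a little care are the index shift $k\mapsto k-1$ in the differentiated left-hand side (the $k=0$ term disappears thanks to the factor $k$), the evaluation at the endpoint $p=1$ (which needs only $m<n$; the stronger hypothesis $m<n-1$ in the lemma matters only for its later use), and confirming that $x\mapsto p = x/(x+1)$ is an increasing bijection of $[0,\infty)$ onto $[0,1)$ that transports the integration limit correctly. As a sanity check, at $x=0$ (hence $p=0$) both sides equal $1$: the left side reduces to $\binom{n}{0}=1$, and the right side equals $\binom{n}{m}(n-m)\,B(m+1,n-m) = 1$ by the Beta-function evaluation.
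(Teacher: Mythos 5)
Your proof is correct. Note, though, that the paper does not prove Lemma~\ref{lemma:Pform} at all: it simply cites \cite[Lemma 1]{Ostr2004} and moves on. So your contribution is to make the statement self-contained, and the argument you give does that cleanly. The substitution $p=x/(x+1)$ correctly turns the left-hand side into the binomial cumulative sum $\sum_{k=0}^m\binom{n}{k}p^k(1-p)^{n-k}$, both sides vanish at $p=1$ (using only $m<n$), and the derivative computation is right: the telescoping after the reindexing $k\mapsto k-1$ leaves exactly $-n\binom{n-1}{m}p^m(1-p)^{n-1-m}$, which matches the derivative of the right-hand side via $n\binom{n-1}{m}=\binom{n}{m}(n-m)$. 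The sanity check at $p=0$ via the Beta integral $B(m+1,n-m)$ is also correct. Your sketched alternative by repeated integration by parts works as well (each step peels off one term $\binom{n}{k}p^k(1-p)^{n-k}$ of the sum); the attribution should read Ostrovskii rather than Ostrowski, and in fact Ostrovskii's cited lemma is exactly this classical binomial-tail/incomplete-Beta identity. You are also right that the hypothesis $0<m<n-1$ is stronger than what the identity needs ($m<n$ suffices); the extra room is only relevant to how the lemma is used later. No gaps.
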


We apply Laplace's method to derive Lemma \ref{lemma:Laplace} 
from Lemma \ref{lemma:Pform}.
Rewriting the integrand, we have for $m=\alpha n$:
 $$\int_{x/(x+1)}^1 e^{ n h(u) } g(u) du,$$
 where $h(u) = [ \alpha \log (u) + (1-\alpha) \log (1-u) ]$,
 and $g(u) = (1-u)^{-1}$.

 \medskip
 
 \noindent {\bf Case 1:} When $x/(x+1) < \alpha$, $h(u)$ achieves its maximum at 
$u=\alpha$,
the unique solution of the saddle-point equation:
$$h'(u) = \alpha/u - (1-\alpha)/(1-u) = 0.$$
Applying Laplace's method \cite[Sec. 3.4]{Miller}, we have:
\begin{align*}
 \int_{x/(x+1)}^1 e^{ n h(u) } g(u) du &= e^{ n h(\alpha) } g(\alpha) \sqrt{\frac{2 \pi}{-n h''(\alpha)}} \left( 1 + O(n^{-1}) \right) \\
 &= \alpha^{\alpha n} (1-\alpha)^{(1-\alpha)n-1} \sqrt{\frac{2 \pi \alpha(1-\alpha)}{n}} \left( 1 + O(n^{-1}) \right).
\end{align*}
Applying Stirling's approximation, we have:
\begin{equation}\label{eq:stirling}
 \binom{n}{m}(n-m) = \frac{\sqrt{n}}{\alpha^{\alpha n}(1-\alpha)^{(1-\alpha)n-1}\sqrt{2 \pi \alpha (1-\alpha)}} \left( 1 + O(n^{-1}) \right).
 \end{equation}
Combining these results into \eqref{eq:asympt}, we find:
 $$\frac{P_{m,n}(x)}{(1+x)^n} = \left( 1 + O(n^{-1}) \right).$$ 

 \noindent {\bf Case 2:} When $x/(x+1) > \alpha$, 
 the saddle-point $u = \alpha$ is outside of the interval of integration,
 and $h(u)$ instead achieves its maximum at the left end-point
 $u = x/(x+1)$.  We thus have (by the alternative form of Laplace's method \cite[Sec. 3.3]{Miller}):
\begin{align*}
 \int_{x/(x+1)}^1 e^{ n h(u) } g(u) du &= e^{ n h(x/(x+1)) } \frac{g(x/(x+1))}{-nh'(x/(x+1))} \left( 1 + O(n^{-1}) \right) \\
 &\sim \left( \frac{x}{x+1} \right)^{\alpha n+1} \left( \frac{1}{x+1} \right)^{(1-\alpha)n-1} \left( \frac{1}{n ( x(1-\alpha) - \alpha)}  \right).
\end{align*}
Combining this with (\ref{eq:stirling}),
we see that 
 $$\frac{P_{m,n}(x)}{(1+x)^n} \sim c_1(x,\alpha) \frac{e^{-c_2(x,\alpha)n}}{\sqrt{n}}.$$ 

\section{Concluding remarks}

We have shown that the average number of zeros of a 
random harmonic polynomial sampled from the truncated model
has order $n^{3/2}$ when $m$ is a fixed fraction of $n$
and grows linearly in $n$ when $m$ is fixed.
In comparison with the Li-Wei model \cite[Thm. 1.1]{LiWei2009},
this behavior seems more indicative of (a probabilistic version) of Wilmshurst's conjecture.

Extending the above-mentioned breakthrough \cite{K-S},
Khavinson and Neumann \cite{K-N} used anti-holomorphic dynamics 
to count zeros of rational harmonic functions of the form $r(z) + \bar{z}$,
giving a complete solution to astronomer S-H. Rhie's conjecture \cite{Rhie} in gravitational lensing.
For further discussion and related results, see \cite{K-N2, K-L, BHJR}.
In order to model stochastic gravitational lensing,
the zeros of random harmonic functions were 
studied by A. Wei in his thesis \cite[Ch. 3]{Wei}
and by Petters, Rider, and Teguia \cite{PRT1, PRT2}.

%It would also be interesting to study the asymptotic behavior of $\EE N_F$
%for the case when $p$ and $q$ have i.i.d. complex Gaussian coefficients (with no binomial coefficient weights).
%Let us refer to this as the \emph{naive model}.
%Li and Wei stated a Kac-Rice type formula \cite[Thm. 4.1]{LiWei2009} for the naive model,
%and they suggested, based on numerical results, that $N_F \sim n$ when $m$ is fixed.
%Our own numerical results confirm this,
%and we further conjecture that $N_F \sim m + n$, both in the case that $m$ is fixed and when $m = \alpha n$ with $0< \alpha < 1$.

Is the variance of $N_F$ asymptotically proportional to the mean?
Computer experiments in \cite{HLLM} suggest that the answer is (perhaps surprisingly, cf. \cite{GranWig2011}) ``no'',
and that the variance instead has order $n^2$.

%Finally, we note that in the problem of counting \emph{real} zeros of \emph{real} random polynomials of one variable
%there are natural Gaussian models \cite{LPX} for which the expected number of zeros turns out to be a fraction of the maximum possible.
%By analogy, this  raises the question of whether there is a natural Gaussian model of random harmonic polynomials for which $\EE N_F$ increases quadratically
% when $m=\alpha n$ for $0<\alpha <n$ (of course, the word ``natural'' is subject to interpretation, 
%but we intend to rule out uninteresting choices,
%such as models that simply take a random perturbation of an extremal example).

%\noindent {\bf Acknowledgement:} 


\begin{thebibliography}{}


\bibitem{BHJR}
P. M. Bleher, Y. Homma, L. L. Ji, R. K. W. Roeder,
\emph{Counting zeros of harmonic rational functions and its application to gravitational lensing}, Internat. Math. Res. Notices., 8 (2014), 2245-2264.

\bibitem{BHS1995}
D. Bshouty, W. Hengartner, and T. Suez, \emph{The exact bound on the number of zeros of harmonic polynomials}, J. Anal. Math., 67 (1995), 207-218.

\bibitem{BL2004}
D. Bshouty, A. Lyzzaik, \emph{On Crofoot-Sarason's conjecture for harmonic polynomials}, Comp. Meth. Funct. Thy.,
4 (2004), 35-41.

\bibitem{BL2010}
D. Bshouty, A. Lyzzaik, \emph{Problems and conjectures for planar harmonic mappings: in
the Proceedings of the ICM2010 Satellite Conference: International Workshop on Harmonic and Quasiconformal Mappings (HQM2010)},
Special issue in: J. Analysis, 18 (2010), 69-82.

\bibitem{DHL}
P. Duren, W. Hengartner, R. S. Langesen, \emph{The argument principle for harmonic functions}, Amer. Math. Monthly, 103 (1996), 411-415.

\bibitem{Duren}
P. Duren, \emph{Harmonic Mappings in the Plane}, Cambridge University Press, 2004.

\bibitem{EK1995}
A. Edelman and E. Kostlan, \emph{How many zeros of a random polynomial are real?}, Bull. Amer. Math. Soc. 32 (1995), 1-37.

\bibitem{G2008}
L. Geyer, \emph{Sharp bounds for the valence of certain harmonic polynomials}, Proc. AMS, 136 (2008), 549-555.

\bibitem{GranWig2011}
A. Granville, I. Wigman, \emph{The distribution of the zeros of random trigonometric polynomials}, Amer. J. Math., 133 (2011), 295-357.

\bibitem{HLLM}
J. Hauenstein, A. Lerario, E. Lundberg, D. Mehta, 
\emph{Experiments on the zeros of harmonic polynomials using certified counting}, Exper. Math., 24 (2015), 133-141.

\bibitem{K-L}
D. Khavinson, E. Lundberg, \emph{Transcendental harmonic mappings and gravitational lensing by isothermal galaxies}, Complex Anal. Oper. Thy. 4 (2010), 515-524.

\bibitem{K-N}
D. Khavinson, G. Neumann, \emph{On the number of zeros of certain rational harmonic functions}, Proc. AMS, 134 (2006), 1077-1085.

\bibitem{K-N2}
D. Khavinson, G. Neumann, \emph{From the fundamental theorem of algebra to astrophysics: a harmonious path}, Notices AMS, 55 (2008), 666-675.

\bibitem{K-S}
D. Khavinson, G. Swiatek, \emph{On a maximal number of zeros of certain harmonic polynomials}, Proc. AMS, 131 (2003), 409-414.

\bibitem{LLL}
S-Y. Lee, A. Lerario, E. Lundberg, \emph{Remarks on Wilmshurst's theorem}, Indiana U. Math. J., 64 No. 4 (2015), 1153–1167

\bibitem{LLrandlem}
A. Lerario, E. Lundberg, \emph{On the geometry of random lemniscates}, preprint.

\bibitem{LiWei2009}
W. V. Li, A. Wei, \emph{On the expected number of zeros of random harmonic polynomials}, Proc. AMS, 137 (2009), 195-204.

\bibitem{LPX}
D. S. Lubinsky, I. E. Pritsker, X. Xie, \emph{Expected number of real zeros for random linear combinations of orthogonal polynomials}, preprint: http://arxiv.org/abs/1503.06376

\bibitem{Miller}
P. Miller, \emph{Applied Asymptotic Analysis}, Amer. Math. Soc., 2006.

\bibitem{Ostr2004}
I. V. Ostrovskii, \emph{On a problem of A. Eremenko}, CMFT, 4 (2004), 275-282.

\bibitem{PSch1996}
R. Peretz, J. Schmid, \emph{On the zero sets of certain complex polynomials}, Proceedings of the Ashkelon Workshop on Complex Function Theory (1996), 203-208,
Israel Math. Conf. Proc. 11, Bar-Ilan Univ. Ramat Gan, 1997.

\bibitem{PRT1}
A.O. Petters, B. Rider, A.M. Teguia, \emph{A mathematical theory of stochastic microlensing I: random time-delay functions and lensing maps}, J. Math. Phys. 50 (2009), 072503.

\bibitem{PRT2}
A.O. Petters, B. Rider, A.M. Teguia, \emph{A mathematical theory of stochastic microlensing II: random images, shear, and the Kac-Rice Formula}, J. Math. Phys. 50 (2009), 122501.

\bibitem{Rhie}
S. Rhie, \emph{n-point gravitational lenses with 5(n-1) images}, archiv:astro-ph/0305166 (2003).

\bibitem{Sh2002}
T. Sheil-Small, \emph{Complex Polynomials}, Cambridge University Press, 2002.

\bibitem{T90}
Y.L. Tong, \emph{The multivariate normal distribution}, Springer-Verlag, 1990.

\bibitem{Wei}
A. Wei, \emph{Random harmonic functions and multivariate Gaussian estimates}, D. Phil. thesis, Univ. of Delaware, 2009.

\bibitem{Wilm94}
A. S. Wilmshurst, \emph{Complex harmonic polynomials and the valence of harmonic polynomials}, D. Phil. thesis, Univ. of York, U.K., 1994.

\bibitem{Wilm98}
A. S. Wilmshurst, \emph{The valence of harmonic polynomials}, Proc. AMS 126 (1998), 2077-2081.

\bibitem{ZZ}
O. Zeitouni, S. Zelditch, \emph{Large deviations of empirical measures of zeros of random polynomials}, IMRN, Vol. 2010, No. 20, 3935-3992.


\end{thebibliography}
\end{document}